\numberwithin{equation}{subsection}
\theoremstyle{joltthm}
\newtheorem{lem}{Lemma}
\newtheorem{thm}[lem]{Theorem}
\newtheorem{cor}[lem]{Corollary}
\newtheorem{defn}[lem]{Definition}
\newtheorem{rmk}{Remark}
\theoremstyle{joltthm}
\newcommand{\Hom}{\operatorname{Hom}}
\newcommand{\Head}{\operatorname{head}}
\newcommand{\rad}{\operatorname{rad}}
\newcommand{\Soc}{\operatorname{soc}}
\newcommand{\Top}{\operatorname{head}}
\newcommand{\End}{\operatorname{End}}
\newcommand{\ind}{\operatorname{ind}}
\renewcommand{\le}{\leqslant}
\newcommand{\SL}{\mathrm{SL}}
\newcommand{\T}{\mathrm{T}}
\renewcommand{\L}{\mathrm{L}}
\newcommand{\im}{\operatorname{im}}
\newcommand{\soc}{\operatorname{soc}}
\begin{document}

\title[A reciprocity result]{A Reciprocity Result for Projective Indecomposable Modules 
of Cellular Algebras and BGG Algebras}

\author{C.~Bowman and S.~Martin}

\address{%
C.~Bowman \\ 
Corpus Christi College \\ 
Cambridge   \\
C.Bowman@dpmms.cam.ac.uk							}  

\address{%
S. ~Martin \\ 
Magdalene College \\ 
Cambridge    \\
s.martin@dpmms.cam.ac.uk}

\date{30th January 2012}
\keywords{Cellular algebras, BGG algebras, Loewy structure}

\maketitle

\begin{abstract}  We show that an adaptation of Landrock's 
Lemma for symmetric algebras also holds for cellular 
algebras and BGG algebras.  This is a result relating 
the radical layers of any two projective modules. 
As a corollary we deduce that BGG reciprocity 
respects Loewy structure.
 \end{abstract}

\section{Introduction}

The Loewy structure of finite groups received considerable attention in the late seventies and eighties.  The calculation of the Loewy structure of particular finite group algebras over fields of characteristic $p$ was carried out by Alperin, Benson, and Koshitani, amongst others.  This detailed information for a given group is difficult to obtain.  The main tools used by these authors were those of local representation theory 
 and Landrock's Lemma. 

 Landrock's Lemma is a reciprocity result relating the Loewy layers of projective indecomposable modules of a symmetric algebra; this reciprocity significantly reduces the work required in the calculation of Loewy structure.

The central result of this paper is an adaptation of Landrock's Lemma (see \cite{al2} Lemma 1.9.10) for symmetric algebras to the setting of cellular and BGG algebras. 

 In the case of cellular algebras this leads to a reduction in the calculation of Loewy structure comparable to the case of symmetric algebras; indeed in the case of group algebras of symmetric groups (which are both symmetric and cellular algebras) we are simply restating the original lemma.
 
   In the case of BGG algebras we obtain as a corollary that BGG reciprocity (formerly seen as a numerical, character theoretic statement) is the shadow of a stronger statement relating the radical filtrations of projective and standard modules.  This not only gives a better understanding of the BGG reciprocity principle, but drastically reduces the computation of the Loewy structure of projective modules to that of standard modules.

We belatedly define rigidity and fix some notation.  The socle of an $A$-module $M$ is defined to be the largest semisimple submodule of $M$, and denoted $\Soc M$.  Now consider the socle of $M/\soc M$, and let $\soc^2 M$ denote the submodule of $M$ containing $\soc M$ such that $\soc^2 M /\soc M$ is the socle of $M/\soc M$. We then inductively define the socle series $\soc^s M$.  The socle layers are the semisimple quotients $\soc_s M=\soc^{s}M/\soc^{s-1}M$.
The radical of of an $A$-module $M$ is defined to be the smallest module with a semisimple quotient, and denote $\rad M$.  We then let $\rad^2 M = \rad (\rad M)$ and inductively define the radical series, $\rad^s M$, of $M$.  The radical layers are then the semisimple quotients $\rad_s M = \rad^{s}M/\rad^{s-1}M$.

We let $\Top M$ denote the first radical layer.  The length of the radical series coincides with the length of the socle series.  For a given module, $M$, this common number is called the Loewy length denoted $ll(M)$.  An $A$-module is \emph{rigid} if the radical and socle series coincide, i.e. if $\rad_sM=\soc_{ll(M)-s}M$.

\section{Cellular Algebras and BGG Algebras}
\subsection{Cellular algebras} 
We recall the original definition of a cellular algebra.
\begin{defn} \rm 
An associative $k$-algebra $A$ is called a cellular algebra with cell datum $(\pi;M;C; i)$ if the following conditions are satisfied:

(C1) The finite set $\pi$ is partially ordered.  Associated with each $\lambda \in \pi$ there is a finite set $M(\lambda)$.  The algebra $A$ has $k$-basis $C^\lambda _{S,T}$ where $(S,T)$ runs through all elements of $M(\lambda)$ for all $\lambda \in \pi$.

(C2) The map $i$ is a $k$-linear anti-automorphism of $A$ with $i^2 = \text{id}$ which sends
each $C^\lambda_{S,T}$ to $C^\lambda_{T,S}$.

(C3) For each $\lambda \in \pi$ and $S,T \in M(\lambda)$ and each $a \in A$ the product $aC^\lambda_{S,T}$ can be written as $(\sum_{U \in M(\lambda)}r_a(U,S)C^\lambda_{U,T}) + r'$ where $r'$ is a linear combination of basis elements with upper index strictly less than $\lambda$, and where coefficients $r_a(U, S) \in k$ do not depend on $T$.
\end{defn}
The following definition has been shown to be equivalent (see \cite{xi}).
\begin{defn} \rm
Let $A$ be a $k$-algebra.  Assume there is an anti-automomorphism $i$ on $A$ with $i^2= \text{id}$.  A two-sided ideal $I$ in $A$ is called a \emph{cell ideal} if and only if $i(I)=I$ and there exists a left ideal $L \subset I$ such that $L$ has finite $k$-dimension and that there is an isomorphism of $A$-bimodules $\alpha:I \cong L \otimes_k i(L)$ making the 
 diagram 
 below
 commutative:
\[\def\objectstyle{\scriptstyle} \xymatrix@=5pt{
&I \ar@{>}[rrrr]^{\alpha}\ar@{>}[dddd]^{i}	&&&&L \otimes_k i(L)\ar@{>}[dddd]^{x \otimes y \mapsto i(y) \otimes i(x)}\\
&\\
&\\
&\\
&I \ar@{>}[rrrr]^{\alpha}			&&&&L \otimes_k i(L)	
}\]

The algebra $A$ is called \emph{cellular} if and only if there is a vector space decomposition $A=I'_1 \oplus \ldots \oplus I'_n$ with $i(I'_j)=I'_j$ for each $j$ and such that setting $I_j=\oplus_{k=1}^jI'_l$ gives a chain of two-sided ideals of $A$ and for each $j$ the quotient $I'_j=I_j/I_{j-1}$ is a cell ideal of $A/I_{j-1}$.
\end{defn}

\subsection{Quasi-hereditary algebras}\label{projectivereciprocity}  There are several equivalent definitions of a quasi-hereditary algebra, we use the definition from \cite{dual} as it allows us to fix convenient notation in the process.  

Let $A$ be an associative finite dimensional $k$-algebra, for $k$ an algebraically closed field.  Let $L(\lambda)$ constitute a full set of simple $A$-modules as $\lambda$ varies over an indexing poset $\pi$.  Let $P(\lambda)$ denote the projective cover and let $I(\lambda)$ denote the injective hull of $L(\lambda)$.  By $\Delta(\lambda)$ we denote the maximal factor module of $P(\lambda)$ such that all composition factors are of the form $L(\mu)$ for $\mu \leq \lambda$ in $\pi$.  The modules $\Delta(\lambda)$ are called standard modules.  The costandard modules, $\nabla(\lambda)$, are defined in a dual manner.

\begin{defn} \rm
Let $A$ be an algebra with standard modules $\Delta$.  The algebra is called quasi-hereditary if
\begin{enumerate}[leftmargin=*,itemsep=0.1em]
\item $\End_A(\Delta(\lambda))\cong k $ for all $\lambda \in \pi$,
\item every projective module has a filtration by standard modules.
\end{enumerate}
\end{defn}

\begin{defn} \rm
A duality on $A$-mod is a contravariant, exact, additive functor $\delta$ from $A$-mod to itself such that $\delta \circ \delta$ is naturally equivalent to the identity functor on $A$-mod and $\delta$ induces a $k$-linear map on the vector spaces $\Hom_A(M,N)$ for all $M, N \in A$-mod.
\end{defn}

\begin{defn} \rm
Let $A$ be a quasi-hereditary algebra.  If there exists a duality functor $\delta$ on $A$-mod such that $\delta(L(\lambda))=L(\lambda)$ for all $\lambda$, then $A$ is said to be a BGG-algebra.
\end{defn}

BGG algebras were defined in \cite{CPSdual} in order to generalise the famous BGG-reciprocity principle for category $\mathcal{O}$ to the general setting of highest weight categories.  They show that for $\lambda, \mu \in \pi$ we have that $[P(\mu):\Delta(\lambda)]=[\Delta(\lambda):L(\mu)]$, and so calculating the characters of projective modules is equivalent to calculating the characters of the smaller standard modules.

\subsection{The involution}  We have that the anti-involution $i$ defines a duality functor, $\sharp$, on a cellular algebra in the following way.  Let $V$ be a left $A$-module, then $V^\sharp=\Hom(V,k)$ is a left $A$-module with the action:
\begin{align*}
(a \circ f)(v)=(f(i(a)v)	 \ \ \ (f \in V^*, a \in A, v \in V).
\end{align*}

We extend $\sharp$ to a duality on BGG algebras by letting $\sharp$ act as $\delta$ on a BGG algebra.
We have for BGG algebras (see \cite{dual}) and cellular algebras (see \cite{Cao}) that the duality $\sharp$ fixes simple modules, and interchanges their projective and injective hulls.

\section{The Result}

The following proposition is an adaptation of Landrock's Lemma (see \cite{al2} Lemma 1.9.10) for symmetric algebras to the setting of cellular algebras and BGG algebras.  

\begin{thm}\label{thm}
Let $A$ be a cellular algebra or a BGG algebra.  For $\lambda, \mu \in \pi$ we have the following reciprocity:
\begin{equation*}\tag{$\dagger$}
 [ \rad_sP(\mu): L(\lambda)] =[\rad_sP(\lambda):L(\mu)]. \\
\end{equation*}
\end{thm}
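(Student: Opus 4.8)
The plan is to factor the desired reciprocity through the injective hulls, exploiting two separate symmetries: the intrinsic duality $\sharp$, which relates $P(\mu)$ to $I(\mu)$, and the ordinary $k$-linear duality $\Hom_k(-,k)$ between left and right modules, which relates $I(\mu)$ to $P(\lambda)$ with the two indices transposed. Neither symmetry alone suffices, but their composition does.

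First I would record how $\sharp$ acts on the two filtrations. Since $\sharp$ is contravariant, exact and additive, it carries the radical series of a module $M$ to the socle series of $\sharp M$: applying $\sharp$ to $0\to\rad M\to M\to\Top M\to 0$ and using that $\sharp$ preserves semisimplicity (it fixes every $L(\lambda)$) identifies $\sharp(\Top M)$ with $\soc(\sharp M)$, and by induction $\sharp(\rad_i M)\cong\soc_i(\sharp M)$. Because $\sharp$ fixes simple modules and interchanges projective covers with injective hulls, so that $\sharp P(\mu)=I(\mu)$, taking the multiplicity of $L(\lambda)$ yields
\[
[\rad_i P(\mu):L(\lambda)] = [\soc_i(\sharp P(\mu)):L(\lambda)] = [\soc_i I(\mu):L(\lambda)].
\]

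The heart of the argument is then the identity
\[
[\soc_i I(\mu):L(\lambda)] = [\rad_i P(\lambda):L(\mu)],
\]
which I claim holds for \emph{any} finite-dimensional algebra over the algebraically closed field $k$, independently of any duality. To prove it, fix primitive idempotents with $Ae_\lambda=P(\lambda)$ and $Ae_\mu=P(\mu)$, and write $D=\Hom_k(-,k)$. Using $\rad^j P(\lambda)=(\rad^j A)e_\lambda$ together with $\Hom_A(Ae_\mu,-)=e_\mu(-)$ and the standard fact $\dim_k\Hom_A(P(\mu),S)=[S:L(\mu)]$ for semisimple $S$, the right-hand side becomes
\[
[\rad_i P(\lambda):L(\mu)] = \dim_k e_\mu\,\rad^{i-1}\!A\,e_\lambda - \dim_k e_\mu\,\rad^{i}\!A\,e_\lambda.
\]
For the left-hand side I would use that $I(\mu)=D(e_\mu A)$, so that the socle series of $I(\mu)$ is the $D$-image of the radical series of the right projective $e_\mu A$; hence $\soc_i I(\mu)\cong D(\rad_i(e_\mu A))$, and the multiplicity of $L(\lambda)$ in it equals the multiplicity of the corresponding right-simple module in $\rad_i(e_\mu A)$. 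Computing the latter with the right-hand analogue $\Hom_{A^{\mathrm{op}}}(e_\lambda A,-)=(-)e_\lambda$ returns exactly the same difference of dimensions, proving the identity.

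Chaining the two equalities gives $[\rad_i P(\mu):L(\lambda)]=[\soc_i I(\mu):L(\lambda)]=[\rad_i P(\lambda):L(\mu)]$, which is $(\dagger)$. The one genuinely delicate step is the second identity: one must keep scrupulous track of the passage between left and right modules under $D$, and verify that the socle layers of an injective hull really are the $D$-duals of the radical layers of the associated right projective. Everything else reduces to the formal behaviour of $\sharp$ on filtrations and to the elementary multiplicity-as-Hom-dimension formula; I expect the proof to live or die on the careful bookkeeping in this middle identity rather than on any conceptual difficulty.
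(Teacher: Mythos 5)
Your proof is correct, but it takes a genuinely different route from the paper's. The paper adapts Landrock's Lemma directly: it fixes a basis $\beta_1,\dots,\beta_{m_s}$ of a complement to $\Hom_A(P(\nu)/\rad^{s-1}P(\nu),I(\mu))$ inside $\Hom_A(P(\nu)/\rad^{s}P(\nu),I(\mu))$, forms $V=\sum_i \im\beta_i$ and $W$ with $W/\rad^{s-1}P(\nu)=\bigcap_i\ker\beta_i$, identifies $m_s=\dim_k\Hom_A(L(\mu),P(\nu)/W)=\dim_k\Hom_A(L(\nu),V^\sharp)$, and then only deduces the \emph{inequality} $a_\mu\le a_\nu$ from the fact that $V^\sharp$ is a quotient of $P(\mu)$ of Loewy length $s$ with $\rad^{s-1}V^\sharp\cong L(\nu)^{a_\mu}$; equality follows by swapping the roles of the two weights. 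You instead factor the statement through the injective hull: the duality-specific input is confined to $\sharp P(\mu)\cong I(\mu)$ and the observation that a contravariant exact duality fixing simples carries radical layers to socle layers, while your middle identity $[\soc_i I(\mu):L(\lambda)]=[\rad_i P(\lambda):L(\mu)]$ is a general fact about finite-dimensional algebras, verified by computing both sides as $\dim_k e_\mu\rad^{i-1}A\,e_\lambda-\dim_k e_\mu\rad^{i}A\,e_\lambda$ (and your bookkeeping is sound: the socle series of $D(e_\mu A)$ is the annihilator filtration of the radical series of $e_\mu A$, and $D$ matches the simple right module at $e_\lambda$ with $L(\lambda)$). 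Your decomposition buys conceptual clarity --- it isolates exactly where the cellular/BGG hypothesis enters and yields a direct equality with no symmetry argument --- whereas the paper's construction produces the auxiliary module $V^\sharp$, which carries structural information beyond the numerical identity and makes the parallel with Landrock's result for symmetric algebras explicit. One caveat you should state: the formula $\dim_k\Hom_A(P(\mu),S)=[S:L(\mu)]$ and the transpose identity require the simples to be absolutely irreducible (a split algebra), so your claim of validity for \emph{any} finite-dimensional algebra should be restricted accordingly; this is harmless here, since $k$ is algebraically closed in the paper's BGG setting and simples of cellular algebras are absolutely irreducible by Graham--Lehrer.
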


\begin{cor}
Let $A$ denote a BGG algebra with poset $\pi$.  For weights $\lambda, \mu \in \pi$ we have that
\begin{align*}
[ \rad_sP(\mu):\Head\Delta(\lambda)]&=[ \rad_s\Delta(\lambda):L(\mu)],\tag{$\dagger\dagger$}
\end{align*}
and so BGG reciprocity respects Loewy structure. 
\end{cor}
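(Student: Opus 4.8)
The plan is to deduce the corollary from Theorem~\ref{thm}. The key observation is that for a BGG algebra, the standard module $\Delta(\lambda)$ is the maximal quotient of $P(\lambda)$ whose composition factors are $L(\mu)$ with $\mu \le \lambda$, and dually the costandard module $\nabla(\lambda) = \delta(\Delta(\lambda))$ embeds in $I(\lambda)$. Since the duality $\sharp$ (acting as $\delta$) fixes all simples and interchanges projective covers with injective hulls, we have $\delta(P(\lambda)) = I(\lambda)$. First I would record the relationship between radical layers of $\Delta(\lambda)$ and socle layers of $\nabla(\lambda)$: applying $\delta$ turns the radical filtration of $\Delta(\lambda)$ into the socle filtration of $\nabla(\lambda)$, so that $[\rad_i \Delta(\lambda) : L(\mu)] = [\soc_i \nabla(\lambda) : L(\mu)]$, using that $\delta$ fixes simples and is exact.

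The main content is to reduce the left-hand side $[\rad_i P(\mu) : \Head \Delta(\lambda)]$ to something computed by Theorem~\ref{thm}. Since $\Head \Delta(\lambda) = L(\lambda)$ (the standard module has simple head $L(\lambda)$), the multiplicity $[\rad_i P(\mu) : \Head \Delta(\lambda)]$ is literally $[\rad_i P(\mu) : L(\lambda)]$. By Theorem~\ref{thm} this equals $[\rad_i P(\lambda) : L(\mu)]$. So the corollary will follow once I show that
\begin{equation*}
[\rad_i P(\lambda) : L(\mu)] = [\rad_i \Delta(\lambda) : L(\mu)].
\end{equation*}
This is the step I expect to be the crux. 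It amounts to saying that when we read off the multiplicity of $L(\mu)$ in the $i$-th radical layer of $P(\lambda)$, only the top quotient $\Delta(\lambda)$ of $P(\lambda)$ contributes in the relevant layers; the lower standard subquotients $\Delta(\nu)$ with $\nu$ above $\lambda$ in the standard filtration of $P(\lambda)$ sit deeper in the radical series and do not interfere with the layer-by-layer count against $\Delta(\lambda)$ itself.

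I would establish this by exploiting the standard filtration of $P(\lambda)$ together with BGG reciprocity $[P(\lambda) : \Delta(\nu)] = [\nabla(\nu) : L(\lambda)]$. The filtration has $\Delta(\lambda)$ at the top (as the unique quotient with the correct head), and the kernel $P(\lambda) \twoheadrightarrow \Delta(\lambda)$ is filtered by $\Delta(\nu)$ with $\nu > \lambda$; each such $\Delta(\nu)$ has head $L(\nu)$ with $\nu \neq \lambda$, so it contributes to $\rad_i P(\lambda)$ only in layers strictly below where it first appears, and crucially the head of $\Delta(\lambda)$ already occupies $\rad_0$. The delicate point is to argue that the radical filtration of $P(\lambda)$ restricts to the radical filtration of the quotient $\Delta(\lambda)$ in the layers being counted — equivalently that the projection $P(\lambda)\twoheadrightarrow \Delta(\lambda)$ is compatible with radical layers, so that $[\rad_i\Delta(\lambda):L(\mu)]$ records exactly the part of $[\rad_i P(\lambda):L(\mu)]$ coming from $\Delta(\lambda)$. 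I would then combine this identity with Theorem~\ref{thm} and the socle--radical duality for $\Delta$ and $\nabla$ to read off $(\dagger\dagger)$, and remark that the symmetry of $(\dagger\dagger)$ in the roles of projectives and standards is precisely the statement that BGG reciprocity respects the Loewy grading.
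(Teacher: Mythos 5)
Your reduction breaks at exactly the step you yourself flag as the crux, and that step is not merely delicate --- it is false. For a non-maximal weight $\lambda$ the identity $[\rad_i P(\lambda):L(\mu)] = [\rad_i\Delta(\lambda):L(\mu)]$ fails in general: the kernel of the projection $P(\lambda)\twoheadrightarrow\Delta(\lambda)$ is filtered by $\Delta(\nu)$ with $\nu>\lambda$, and its composition factors do land in the radical layers of $P(\lambda)$ and inflate the left-hand count. Take the simplest BGG algebra with two weights $\lambda<\nu$ (e.g.\ the principal block of category $\mathcal{O}$ for $\mathfrak{sl}_2$): there $\Delta(\lambda)=L(\lambda)$ is simple, while $P(\lambda)$ has Loewy layers $L(\lambda)$, $L(\nu)$, $L(\lambda)$; so with $\mu=\lambda$ and $i$ the bottom layer, the left side is $1$ and the right side is $0$. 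Compatibility of the projection with radical filtrations only gives the inequality $[\rad_i\Delta(\lambda):L(\mu)]\le[\rad_i P(\lambda):L(\mu)]$, never the equality you need, so no refinement of that argument can close the gap.

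A second, related problem is your opening move: reading $[\rad_i P(\mu):\Head\Delta(\lambda)]$ as $[\rad_i P(\mu):L(\lambda)]$. The paper's Remark following the Corollary rules this out explicitly: $\Head\Delta(\lambda)$ means the head of a subquotient $\Delta(\lambda)$ occurring in the standard filtration of $P(\mu)$, not an arbitrary composition factor isomorphic to $L(\lambda)$. Under your reading the Corollary itself is false in the same two-weight example: taking $\mu=\nu$, one has $[\rad_2 P(\nu):L(\lambda)]=1$ (since $P(\nu)=\Delta(\nu)$ has socle $L(\lambda)$), yet $[\rad_2\Delta(\lambda):L(\nu)]=0$ because $\Delta(\lambda)$ is simple. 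The paper's proof sidesteps both issues simultaneously by inducting on the poset $\pi$: it strips off a maximal weight $\lambda$, for which $P(\lambda)\cong\Delta(\lambda)$ (so your crux identity becomes trivially true) and for which maximality forces every occurrence of $L(\lambda)$ in $P(\mu)$ to be the head of a $\Delta(\lambda)$ in the standard filtration (so the two readings of the left-hand side agree); pairs of weights in $\pi\setminus\{\lambda\}$ are then handled by induction in the quotient BGG algebra $A/\langle e_\lambda\rangle$. Maximality, secured by the induction, is precisely the ingredient your flat, non-inductive reduction lacks.
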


\begin{rmk} \rm
By $[ \rad_sP(\mu):\Top\Delta(\lambda)]$ we mean the number of successive subquotients $\Delta(\lambda_j)$ in a \emph{fixed} $\Delta$-filtration of $P(\mu)$, such that $\lambda_j = \lambda$ and there is a surjection $\rad^s P(\mu)\to \Delta(\lambda) $ which carries the subquotient $\Delta(\lambda_j)$ onto $\Delta(\lambda)$. 

Heuristically, we think of this as follows. The $\dagger$ reciprocity relates the occurrences of simple modules in projective modules.  The $\dagger \dagger$ reciprocity relates how the Weyl filtrations sit within the Loewy structure of the projective modules --- in other words, by $\Head\Delta(\lambda)$ we mean the head of the Weyl module, not a simple module isomorphic to the head.  
\end{rmk}

\begin{rmk} \rm
We may replace projective by injective, standard by costandard, and radical by socle to obtain the dual of the above theorem and corollary.
\end{rmk}

\begin{cor}
Let $A$ denote a cellular algebra or a BGG algebra.  Let $\bar{A}=A / \rad^sA$, $\bar{P}(\lambda)=P(\lambda)/\rad^sP(\lambda)$.  Then $\bar{A}$ is a finite dimensional algebra with a complete list of simple modules given by $L(\lambda)$ as $\lambda$ ranges over $\pi$, each with corresponding prinicipal indecomposable modules $\bar{P}(\lambda)$.  Denote the Cartan matrix by $\bar{C}_{ij}$.  Then the Cartan matrix $\bar{C}_{ij}$ is symmetric.
\end{cor}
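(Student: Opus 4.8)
The plan is to reduce everything to Theorem \ref{thm} by writing each Cartan number of $\bar A$ as a finite sum of radical-layer multiplicities of the projective indecomposables of $A$. First I would dispose of the structural claims. Since $\rad^i A \subseteq \rad A$, one has $\rad \bar A = \rad A/\rad^i A$ and hence $\bar A/\rad\bar A \cong A/\rad A$, so the simple $\bar A$-modules are precisely the $L(\lambda)$ for $\lambda \in \pi$. Using the standard fact that $\rad^i P(\lambda) = (\rad A)^i P(\lambda)$ for the projective module $P(\lambda)$, the projective cover of $L(\lambda)$ over the quotient $\bar A = A/\rad^i A$ is $P(\lambda)/(\rad^i A)\, P(\lambda) = P(\lambda)/\rad^i P(\lambda) = \bar P(\lambda)$, which identifies the principal indecomposables as claimed and exhibits $\bar A$ as a finite dimensional algebra.

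The key step is the radical-filtration bookkeeping. Because $\rad^i P(\lambda) \subseteq \rad^j P(\lambda)$ for $j \le i$, one checks that $\rad^j \bar P(\lambda) = \rad^j P(\lambda)/\rad^i P(\lambda)$ for every $j \le i$ and that $\rad^i \bar P(\lambda) = 0$. Consequently the radical layers satisfy $\rad_j \bar P(\lambda) = \rad_j P(\lambda)$ for $1 \le j \le i$, and the composition factors of $\bar P(\lambda)$ are exactly those occurring in the top $i$ radical layers of $P(\lambda)$. Writing the $(\lambda,\mu)$ Cartan number as a composition multiplicity, this decomposition gives
\[
\bar C_{\lambda\mu} = [\bar P(\mu):L(\lambda)] = \sum_{j=1}^{i}[\rad_j P(\mu):L(\lambda)].
\]

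With this in hand the symmetry is immediate. Applying Theorem \ref{thm} termwise (with $j$ as the layer index) yields $[\rad_j P(\mu):L(\lambda)] = [\rad_j P(\lambda):L(\mu)]$ for each $j$, so summing over $j$ gives $\bar C_{\lambda\mu} = \sum_{j=1}^{i}[\rad_j P(\lambda):L(\mu)] = [\bar P(\lambda):L(\mu)] = \bar C_{\mu\lambda}$. I expect the only real obstacle to be the second step: making sure the radical series of the quotient $\bar P(\lambda)$ genuinely restricts to the truncated radical series of $P(\lambda)$, so that the Cartan number really does split as the above finite sum of layer multiplicities. Once that bookkeeping is correct, Theorem \ref{thm} supplies all the content and no further appeal to the cellular or BGG structure is needed.
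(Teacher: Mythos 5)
Your proof is correct and is precisely the argument the paper intends: the paper states this corollary without any proof, treating it as immediate from Theorem \ref{thm}, and your reduction --- identifying the simples and principal indecomposables of $\bar{A}$, checking $\rad_j \bar{P}(\mu) = \rad_j P(\mu)$ for $j \le i$, writing $\bar{C}_{\lambda\mu} = \sum_{j=1}^{i}[\rad_j P(\mu):L(\lambda)]$, and applying the theorem termwise --- is exactly the intended fleshing-out. The bookkeeping step you flag as the only delicate point is indeed sound, since for modules over a finite dimensional algebra one has $\rad(M/N) = (\rad M + N)/N$, so the radical series of $\bar{P}(\mu)$ genuinely is the truncated radical series of $P(\mu)$.
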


\begin{lem}\label{land}
Let $\beta_1, \ldots, \beta_{m_s}$ be a basis of a complement to $\Hom_{A}(P(\nu)/\rad^{s-1}$ $P(\nu),I(\mu))$ in $\Hom_{A}(P(\nu)/\rad^{s}P(\nu),I(\mu))$.  Then let 
\begin{align*}
V=\sum_{i=1}^{m_s}\im (\beta_i) \subseteq I(\mu), \ \ \ \ \ W / \rad^{s-1}P(\nu)=\bigcap_{i=1}^{m_s} \ker (\beta_i).
\end{align*}
Then we have that 
\begin{align*}
m_s= \dim_k(\Hom_{A} (L(\mu),P(\nu) / W)) &= \dim_k (\Hom_{A}(V, L(\nu))), \\&=\dim_k (\Hom_{A}((L(\nu), V^\sharp)).
\end{align*}
\end{lem}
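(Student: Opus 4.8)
The plan is to prove the three equalities in Lemma~\ref{land} by tracking the quantity $m_s$ through a chain of dimension-preserving identifications, using the defining property of the homomorphisms $\beta_i$ together with the duality $\sharp$ that interchanges projective covers and injective hulls. First I would unwind the definition of the $\beta_i$: they form a basis of a complement, so $m_s$ is by construction the dimension of the quotient space
\[
\Hom_A(P(\nu)/\rad^s P(\nu), I(\mu)) \big/ \Hom_A(P(\nu)/\rad^{s-1}P(\nu), I(\mu)).
\]
The containment of Hom-spaces arises from precomposing with the natural surjection $P(\nu)/\rad^s P(\nu) \twoheadrightarrow P(\nu)/\rad^{s-1}P(\nu)$; a map in the smaller space is exactly one that kills the image of $\rad^{s-1}P(\nu)$, i.e. one factoring through the coarser quotient. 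I expect that identifying this quotient-of-Homs with a single clean Hom-space is the crux of the argument.

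Next I would establish the first displayed equality, $m_s = \dim_k \Hom_A(L(\mu), P(\nu)/W)$. The module $W$ is defined so that $W/\rad^{s-1}P(\nu) = \bigcap_i \ker(\beta_i)$ inside $P(\nu)/\rad^{s-1}P(\nu)$. My plan is to argue that the $\beta_i$ induce a map $P(\nu)/W \to I(\mu)^{\oplus m_s}$ whose image detects precisely the copies of $L(\mu)$ sitting in the relevant radical layer; since $I(\mu)$ has simple socle $L(\mu)$, any nonzero map out of a module with socle $L(\mu)$ into $I(\mu)$ is governed by how many times $L(\mu)$ appears in the socle. Concretely, because the $\beta_i$ are linearly independent modulo the maps factoring through the coarser quotient, the quotient $P(\nu)/W$ has socle containing exactly $m_s$ copies of $L(\mu)$ coming from the $s$-th layer, giving $\dim_k \Hom_A(L(\mu), P(\nu)/W) = m_s$. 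The main obstacle here is verifying that $W$ is genuinely a submodule (not merely a subspace) and that the counting of $L(\mu)$-summands in the socle is exact rather than merely an inequality; this requires carefully using that the $\beta_i$ were chosen as a basis of a \emph{complement}, so no nontrivial linear combination factors through $P(\nu)/\rad^{s-1}P(\nu)$.

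For the remaining two equalities I would invoke the duality $\sharp$ directly. Since $\sharp$ interchanges $P(\nu)$ and $I(\nu)$ and fixes simples, applying $\sharp$ to a map $V \hookrightarrow I(\mu)$ converts statements about $V$ inside the injective hull $I(\mu)$ into dual statements about $V^\sharp$ as a quotient of the projective cover $P(\mu)$. I would use the standard adjunction-style identity $\dim_k \Hom_A(M,N) = \dim_k \Hom_A(N^\sharp, M^\sharp)$, together with $L(\nu)^\sharp \cong L(\nu)$ and $L(\mu)^\sharp \cong L(\mu)$, to pass between $\Hom_A(V, L(\nu))$ and $\Hom_A(L(\nu), V^\sharp)$, which immediately gives the last equality. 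To connect $\dim_k \Hom_A(V, L(\nu))$ to $m_s$, I would identify $V = \sum_i \im(\beta_i) \subseteq I(\mu)$ as the module dual, under $\sharp$, to the quotient $P(\nu)/W$, so that $\Hom_A(V, L(\nu))$ and $\Hom_A(L(\mu), P(\nu)/W)$ are interchanged by the duality up to the fixed-simples property; the bookkeeping of which simple ($L(\mu)$ versus $L(\nu)$) lands on which side under $\sharp$ is the delicate point, and I would keep the indices $\mu$ and $\nu$ scrupulously separated throughout to ensure the symmetry is recorded correctly.
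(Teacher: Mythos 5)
Your treatment of the last equality (duality exchanging the two Hom-spaces, with simples fixed) agrees with the paper, and your sketch of the first equality is in the right spirit. But the route you propose for the middle equality rests on a false identification: it is \emph{not} true that $V^\sharp \cong P(\nu)/W$. The module $V$ is a sum of homomorphic images of $P(\nu)/\rad^{s}P(\nu)$, so its head is a direct sum of copies of $L(\nu)$, while its socle, sitting inside $\soc I(\mu)$, is $L(\mu)$; hence $V^\sharp$ has \emph{simple head} $L(\mu)$ and is a quotient of $P(\mu)$ (this is precisely how the paper uses $V^\sharp$ in the proof of the Theorem). By contrast, $P(\nu)/W$ is a quotient of $P(\nu)$ and has simple head $L(\nu)$, so the two modules cannot be isomorphic unless $\mu=\nu$. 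Concretely, take the BGG algebra with poset $1<2$, $P(1)$ uniserial with layers $L(1),L(2),L(1)$ and $P(2)=\Delta(2)$ with layers $L(2),L(1)$, and take $\nu=1$, $\mu=2$, $s=2$: then $m_s=1$, $V\cong I(2)$, $V^\sharp\cong\Delta(2)$ has head $L(2)$, while $P(1)/W$ has head $L(1)$. Worse, if your identification held, the duality bookkeeping would produce $\Hom_A(L(\nu),P(\nu)/W)$ rather than $\Hom_A(L(\mu),P(\nu)/W)$; in the example $\Hom_A(L(1),P(1)/W)=0\neq 1=m_s$. So the middle equality cannot be obtained by transporting the first equality across $\sharp$.

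The paper's argument for the middle equality is direct and uses no duality: since the $\beta_i$ span a complement to $\Hom_A(P(\nu)/\rad^{s-1}P(\nu),I(\mu))$, every nonzero combination $\beta=\sum b_i\beta_i$ has image of Loewy length exactly $s$. If $\eta:V\to V/\rad V$ is the canonical map and $\sum b_i(\eta\circ\beta_i)=0$, then $\im\beta\subseteq\rad V$, which has Loewy length at most $s-1$ because $V$ has Loewy length at most $s$; thus $\beta$ kills $\rad^{s-1}P(\nu)/\rad^{s}P(\nu)$ and lies in $\Hom_A(P(\nu)/\rad^{s-1}P(\nu),I(\mu))$, a contradiction. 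Hence the composites $\eta\circ\beta_i$ are linearly independent, and since $V/\rad V$ is a priori a quotient of $L(\nu)^{m_s}$, this forces $V/\rad V\cong L(\nu)^{m_s}$, i.e.\ $\dim_k\Hom_A(V,L(\nu))=m_s$; duality then yields only the final equality. I would also note that your first-equality argument, though headed the right way, is left as a sketch at its crucial point: the exact count $\Soc(P(\nu)/W)\cong L(\mu)^{m_s}$ follows because each $\beta_i$ carries the semisimple module $\rad^{s-1}P(\nu)/\rad^{s}P(\nu)$ into $\soc I(\mu)=L(\mu)$, and the restricted maps are linearly independent by the same complement argument, so the image of $\rad^{s-1}P(\nu)$ in $P(\nu)/W$ is exactly $L(\mu)^{m_s}$, while the embedding $P(\nu)/W\hookrightarrow I(\mu)^{m_s}$ gives the reverse bound.
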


\begin{proof} This follows as any $\beta=\sum b_i \beta_i$ is of Loewy length $s$ and if $\eta$ is the canonical homomorphism $V \to V/\rad V$ then $\beta_1\eta, \ldots , \beta_{m_s}\eta$ are linearly independent and so $V/\rad V \cong L(\nu)^{m_s}$, which gives the first equality.  The second equality follows from properties of the duality.
\end{proof}

\noindent
{\bf Proof of Theorem 1.} \quad
We define:
\begin{align*}
a_\nu &=\dim(\Hom_{A}(L(\nu), \rad^{s-1}P(\mu)/\rad^sP(\mu))), \\
\text{and } a_\mu &=\dim(\Hom_{A}(L(\mu), \rad^{s-1}P(\nu)/\rad^sP(\nu))).  
\end{align*}
By symmetry we need only prove that $a_\mu \leq a_\nu$. We have that $\Soc (P/W)= L(\mu)^{a_\mu}$ because any simple module not isomorphic to $L(\mu)$, or from a previous radical layer, is in $\bigcap_{i=1}^{m_s} \ker (\beta_i)$, and so by Lemma \ref{land} we have that:
\begin{align*}
a_\mu = m_s &=  \dim_k (\Hom_{A}(L(\mu),P(\nu) / W)),\\
 &= \dim_k (\Hom_{A}(V, L(\nu))), \\&=\dim_k (\Hom_{A}(L(\nu), V^\sharp)	). 
\end{align*}
Now, $V^\sharp$ has simple head isomorphic to $L(\mu)$ and is therefore a homomorphic image of $P(\mu)$.  Also $V^\sharp$ has Loewy length $s$ and $\rad^{s-1}V^\sharp=L(\nu)^{m_s}=L(\nu)^{a_\mu}$ and therefore we have that $a_\mu \leq a_\nu$.  This proves $\dagger$. \qed

\noindent{\bf Proof of Corollary 2.}\quad
In order to prove $\dagger \dagger$ we proceed by induction on the partial ordering of $A$.  If $\pi$ is a singleton, then the algebra is semisimple and we are done.  

If not, let $\lambda$ be a maximal element of $\pi$, then by induction the result holds for $A/\langle e_\lambda\rangle$ where $\langle e_\lambda\rangle$ is the ideal generated by the primitive central idempotent corresponding to the weight $\lambda$.  This quotient is then a BGG algebra with respect to the indexing set $\pi \backslash \{ \lambda\}$.
By our inductive assumption we only need show that the reciprocity holds for pairs $\mu\in \pi\backslash \{\lambda\}$ and $\lambda$.  We have that $\lambda$ is maximal and therefore by
Section 2.2
 we have that $P(\lambda)\cong\Delta(\lambda)$.  
 
 Now by applying $\dagger$ to $\mu \in \pi\backslash \{\lambda\}$ and $\lambda$ we have that:
\begin{align*}
[\rad_s\Delta(\lambda):\text{L}(\mu)] &= [\rad_sP(\lambda):\text{L}(\mu)],\\ &=[\rad_sP(\mu):L(\lambda)].
\end{align*}  Now as projective modules have a Weyl filtration, and there is no heavier weight than $\lambda$ in $\pi$, we have that $\dagger \dagger$ follows. \qed

\section{Applications to algebraic groups and their Schur algebras}

\subsection{Algebraic groups}
Let $G$ be a semisimple, simply connected linear algebraic group over
an algebraically closed field $k$ of positive characteristic $p$. We
fix a Borel subgroup $B$ and a maximal torus $T$ with $T \subset B
\subset G$ and we let $B$ determine the negative roots. We write $X =
X(T)$ for the character group of $T$, and $Y=Y(T)$ denote the co-character group.  We let $X^+$ denote the set of
dominant weights, and let $X_1$ denote the set of restricted weights, that is:
\[
  X_1 := \{ \nu \in X^+ \mid \langle \alpha^\vee, \nu \rangle \le
p \text{ for all simple roots } \alpha \}.
\]

By $G$-module we always mean a rational $G$-module,
i.e.\ a $K[G]$-comodule, where $K[G]$ is the coordinate algebra of
$G$.  For each $\lambda \in X^+$ we have the following (see
\cite{Jantzen}) finite dimensional $G$-modules:
\[
\begin{tabular}{ll}
$\L(\lambda)$ & simple module of highest weight $\lambda$;\\
$\Delta(\lambda)$ & Weyl module of highest weight $\lambda$;\\
$\nabla(\lambda)$ & $= \ind_B^G k_\lambda$; dual Weyl module of
 highest weight $\lambda$; \\
$\T(\lambda)$ & indecomposable tilting module of highest weight
$\lambda$
\end{tabular}
\]
where $k_\lambda$ is the 1-dimensional $B$-module upon which $T$ acts
by the character $\lambda$ with the unipotent radical of $B$ acting
trivially.  The simple modules $\L(\lambda)$ are contravariantly
self-dual.  The module $\nabla(\lambda)$ has simple socle isomorphic
to $\L(\lambda)$; the module $\Delta(\lambda)$ is isomorphic to the contravariant dual of $\nabla(\lambda)$, hence
has simple head isomorphic to $\L(\lambda)$.

\subsection{Schur Algebras}
Let $S(\pi)$ denote the generalised (quantum) Schur algebra corresponding to a finite saturated set $\pi$ of dominant weights of the semisimple, complex finite dimensional Lie algebra $\frak{g}$, in the sense of \cite{don1}.  Such an algebra is a BGG-algebra with standard modules $\Delta$.

\subsection{Projective tilting modules}\label{Jantzen} In  \cite{Zeit2} projective-injective modules of $(q)$-Schur algebras are studied and shown to be precisely the projective tilting modules for the algebra. 

Assume that the field $k$ is of characteristic $p\geq 2h-2$ .  By results of Jantzen in \cite{jan} we have, for $\lambda \in X_1$, that the $S(\pi)$-tilting module $T(2(p-1)\rho+\omega_0\lambda)$ is isomorphic to $P(\lambda)$ the projective module for the generalised Schur algebra $S(\pi)$ where $\pi=\{r \in X : r\leq 2(p-1)\rho\}$.  By the above, these tilting modules are both projective and injective, and therefore we can use both Corollary 2 and its dual to examine the radical and socle structure of these tilting modules, and hence answer questions concerning rigidity of these modules.

\subsection{Rigidity of tilting modules}
Following Andersen and Kaneda \cite{and} we consider the Loewy structure of tilting modules for algebraic groups.

\begin{cor}\label{tilt}
Let $G$ be a semisimple, simply connected algebraic group and let $p\geq 2h-2$.  
We have the following description of the radical layers of the tilting modules with highest weight from the region $(p-1)\rho+ X_1$, 
\begin{align*}
[ \rad_sT(2(p-1)\rho+\omega_0\mu):\Head \Delta(\lambda)]= [\rad_s\Delta(\lambda):L(\mu)], \\
[ \soc_sT(2(p-1)\rho+\omega_0\mu):\soc_1 \nabla(\lambda)]= [\soc_s\nabla(\lambda):L(\mu)].
\end{align*}
\end{cor}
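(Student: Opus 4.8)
The plan is to obtain the statement as a direct instance of the reciprocity $(\dagger\dagger)$ of Corollary 2, once the tilting modules appearing on the left are reinterpreted as projective modules for a suitable Schur algebra. The only genuine input beyond $(\dagger\dagger)$ is the identification of modules supplied by Jantzen's theorem recorded in \ref{Jantzen}, so the work of the proof lies almost entirely in setting up the two ingredients correctly rather than in any fresh computation.

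First I would fix the generalised Schur algebra $S(\pi)$ attached to the saturated set $\pi = \{ r \in X : r \le 2(p-1)\rho \}$. As recorded in Section 5 this algebra is a BGG algebra, with standard modules $\Delta$ and costandard modules $\nabla$, so Corollary 2 applies verbatim: for all $\lambda, \mu \in \pi$,
\[
[\rad_i P(\mu) : \Head \Delta(\lambda)] = [\rad_i \Delta(\lambda) : L(\mu)],
\]
where $P(\mu)$ is the projective cover of $L(\mu)$ in $S(\pi)$-mod. Next, assuming $p \ge 2h-2$ and taking $\mu \in X_1$, the result \ref{Jantzen} of Jantzen gives an isomorphism $T(2(p-1)\rho + \omega_0\mu) \cong P(\mu)$ of $S(\pi)$-modules. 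Substituting this isomorphism into the left-hand side of the displayed equality yields exactly
\[
[\rad_i T(2(p-1)\rho + \omega_0\mu) : \Head \Delta(\lambda)] = [\rad_i \Delta(\lambda) : L(\mu)],
\]
which is the assertion of the corollary; here, as in the Remark following Corollary 2, $\Head\Delta(\lambda)$ is to be read as the head of the Weyl module and not merely a simple module isomorphic to it.

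The step I expect to require the most care is not the algebraic manipulation but the compatibility of the two module categories involved. The radical layers on the left are naturally read off from $T(2(p-1)\rho + \omega_0\mu)$ as a $G$-module, whereas $(\dagger\dagger)$ is a statement about $P(\mu)$ in $S(\pi)$-mod. Because $\pi$ is saturated, $S(\pi)$-mod embeds as a full subcategory of $G$-mod that is closed under taking subquotients of modules all of whose composition factors have highest weight in $\pi$; hence submodule lattices, radical series and composition multiplicities agree on both sides, and the identification of $\Delta$, $\nabla$ and $T$ with their group-theoretic counterparts is legitimate. One should also verify that the weights $2(p-1)\rho + \omega_0\mu$ arising from $\mu \in X_1$ fall within the region $(p-1)\rho + X_1$ advertised in the statement, and in particular inside $\pi$, so that both $T(2(p-1)\rho+\omega_0\mu)$ and $P(\mu)$ are genuinely objects of $S(\pi)$-mod; this is precisely the range in which the isomorphism of \ref{Jantzen} is asserted, so no additional hypotheses are needed beyond $p \ge 2h-2$.
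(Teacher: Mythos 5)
Your proposal is correct and follows essentially the same route as the paper: apply the reciprocity $(\dagger\dagger)$ of Corollary 2 to the BGG algebra $S(\pi)$ and substitute Jantzen's isomorphism $T(2(p-1)\rho+\omega_0\mu)\cong P(\mu)$ from \ref{Jantzen}. Your additional remarks on the compatibility of $S(\pi)$-mod with $G$-mod (via saturation of $\pi$) make explicit a point the paper leaves implicit, but they do not change the argument.
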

\begin{proof}
We have that \ref{Jantzen} implies that as $S(\pi)$-modules, $T(2(p-1)\rho+\omega_0\lambda)\cong P(\lambda)\cong I(\lambda)$, substituting this into $\dagger\dagger$ our corollary follows.
\end{proof}

\begin{rmk} \rm
Therefore, determining the Loewy structure of the tilting modules from $(p-1)\rho+ X_1$ is equivalent to determining the Loewy structure of the Weyl modules from lower down in the linkage ordering.  This is illustrated in Section 4.5
.  In Corollary 4.4 of \cite{and} it is shown that these modules are rigid for $p\geq3h-3$.
\end{rmk}

\begin{rmk} \rm
Corollary \ref{tilt} follows from the fact that the tilting modules are both projective and injective module as modules for the Schur algebra.  The question of when tilting modules are projective is addressed in \cite{Zeit2} and a generalised Mullineux bijection is described which generalises Corollary \ref{tilt}.
\end{rmk}

\begin{rmk} \rm
The condition that $p\geq2h-2$ in Corollary \ref{tilt} is necessary.  A counter example of Corollary \ref{tilt} for small primes is given in Ringel's appendix to \cite{DM}.  This is because the tilting modules in Corollary \ref{tilt}  are not necessarily projective in characteristic $p<2h-2$.
\end{rmk}

\subsection{An example}\label{examp}
 Let $G=\text{SL}_3(k)$ for $k$ an algebraically closed field of characteristic $p \geq 2h-2=4$. 
  Take $T$ to be the subgroup of diagonal matrices and $B$ to be the group of lower triangular matrices.  We let $W$ denote the Weyl group, which is isomorphic to the symmetric group on three letters.  

 Let $X(T) \cong \mathbb{Z}^2$ denote the character group of $G$ and $Y \cong \text{Hom}_\mathbb{Z}(X,\mathbb{Z})$ denote the co-character group.  Setting $\alpha_1=(2,-1)$ and $\alpha_2=(-1,2)$ we have that $R=\{\alpha_1,\alpha_2,\alpha_1+\alpha_2\}$ are the roots of $G$ and that $S=\{\alpha_1,\alpha_2\}$ are the simple roots.  For $\gamma \in R$ we let $\check{\gamma} \in Y$ denote the dual root.

We enumerate the alcoves in the dominant chamber of $A_2$ as in Figure \ref{sl3label}.
The structure of the Weyl modules for $p\geq 5$ is well known (see for example \cite{DS} and \cite{ron}), and easily calculated by hyperalgebra calculations (which in this case are well within the range of GAP).  We have that:
\begin{align*}
\Delta(1)=[1], \ \ \ \Delta(2)=[2,1], \ \ \ \  \ \Delta(3)=[3,2], 
\end{align*}
\begin{align*}\Delta(4) =\begin{minipage}{34mm}
\def\objectstyle{\scriptstyle}
\xymatrix@=6pt{
  &4 \ar@{-}[dl] \ar@{-}[dr] & \\
3\ar@{-}[dr] &1 \ar@{-}[d] \ar@{-}[u] &3'\ar@{-}[dl] \\
  & 2&
} \; \end{minipage}, \ \ \ \ 
\Delta(5) =\begin{minipage}{34mm}
\def\objectstyle{\scriptstyle}
\xymatrix@=6pt{
&5\ar@{-}[dl] \ar@{-}[dr] \\
4\ar@{-}[drr] \ar@{-}[d] \ar@{-}[dr] &		&4' \ar@{-}[d] \ar@{-}[dl] \ar@{-}[dll] \\
3\ar@{-}[dr] &1 \ar@{-}[d] \ar@{-}[ul] &3'\ar@{-}[dl] \\
  & 2&
}  \end{minipage}.
\end{align*}

\begin{figure}[ht]
 \begin{center}
\begin{tikzpicture}[scale=1.5]
  \path (0,0) coordinate (origin);
  \path (60:1cm) coordinate (A1);
  \path (60:2cm) coordinate (A2);
  \path (60:3cm) coordinate (A3);
  \path (60:4cm) coordinate (A4);
  \path (120:1cm) coordinate (B1);
  \path (120:2cm) coordinate (B2);
  \path (120:3cm) coordinate (B3);
  \path (120:4cm) coordinate (B4);
  \path (A1) ++(120:2cm) coordinate (C1);
  \path (A2) ++(120:1cm) coordinate (C2);
   \draw[thick] (origin) -- (A3) (origin) -- (B3)  
        (A1) -- (C1) (A2) -- (C2)  
          (B2) -- (C1) (B1) -- (C2) (B3) -- (C1) 
        (B1) -- (A1) (B2) -- (A2) (B3) -- (A3);
  \draw (origin) ++(0,0.6) node {\small$\mathbf{1}$};
  \draw (origin) ++(0,1.1) node {\small$\mathbf{2}$};
  \draw (A1) ++(0,0.6) node {\small$\mathbf{3}$};
  \draw (A1) ++(0,1.1) node {\small$\mathbf{4}$};
  \draw (B1) ++(0,0.6) node {\small$\mathbf{3}'$};
  \draw (B1) ++(0,1.1) node {\small$\mathbf{4}'$};
  \draw (A2) ++(0,0.6) node {\small$\mathbf{6}$};
  \draw (B2) ++(0,0.6) node {\small$\mathbf{6}'$};
  \draw (A1) ++(120:1cm) ++(0,0.6) node {\small$\mathbf{5}$};
  \end{tikzpicture}
\end{center}
\caption{The fundamental alcoves for $G={\rm SL}_3$}\label{sl3label}
\end{figure}
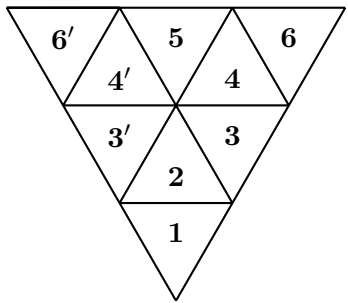
 We study the tilting modules for $\text{SL}_3(k)$ by considering the structure of the corresponding generalised Schur algebra $S(\pi)$ where $\pi=\{r \in X: r \leq 2(p-1)\rho \}$.
By Section 4.3 
 we have that $P(2)=T(5)$. Now, 2 appears in the first layer of $\Delta(2)$, second layer of $\Delta(3)$ and $\Delta(3')$, the third layer of $\Delta(4)$ and $\Delta(4')$, and fourth layer of $\Delta(5)$.  Therefore, by Corollary 2, the tilting module $T(5)$ has the following Loewy structure:
\begin{center}
\begin{minipage}{34mm}
\def\objectstyle{\scriptstyle}
\xymatrix@=6pt{
&			&		&		&2\ar@{-}[d]		&		&		&		\\
&			&		&3\ar@{-}[d]		&1				&3'\ar@{-}[d]		&		\\
&			&4\ar@{-}[d]\ar@{-}[dl]\ar@{-}[dr]	&2		&		&2		&4'\ar@{-}[d]\ar@{-}[dr]\ar@{-}[dl]		  		\\
&3			&1		&3'		&5\ar@{-}[dr]\ar@{-}[dl]		&3		&1		&3'		\\
&			&2\ar@{-}[u]\ar@{-}[ur]\ar@{-}[ul]		&4\ar@{-}[d]\ar@{-}[dr]\ar@{-}[drr]		&		&4'	\ar@{-}[d]\ar@{-}[dl]\ar@{-}[dll]	&2\ar@{-}[u]\ar@{-}[ur]\ar@{-}[ul]		&		\\
&			&		&3		&1		&3'		&		&		\\
&			&		&		&2\ar@{-}[u]\ar@{-}[ur]\ar@{-}[ul]		&		&		&		\\
}\end{minipage}
\end{center}
note that we have highlighted where the Weyl filtration lies within the Loewy layers.  This diagram is symmetrical with regards to a horizontal reflection through the middle.  Therefore using the dual of Corollary 2 to calculate the socle structure, we see that this module is rigid for all $p\geq 2h-2$.

Andersen and Kaneda show that the $p^2$-restricted tilting modules sufficiently `far from the walls' are rigid for $p\geq 3h-3$ (assuming the Lusztig conjecture).  In the case of $\SL_3(k)$ these are precisely the tilting modules of highest weight $\lambda$ such that $\langle \lambda^\vee, \alpha_1+\alpha_2\rangle \leq p^2$, and $\langle \lambda^\vee, \alpha \rangle \geq p$ for at least one simple root $\alpha\in S$.

In the case of $\SL_3(k)$, we can provide a complete solution to the question of determining when $p^2$-restricted tilting modules are rigid, as follows.
The Weyl module structure for the weights `close to the walls' is calculated in \cite{DS}.  Using this information and the arguments above one can show that
 the $p^2$-restricted tilting modules for $\SL_3(k)$ are all rigid if and only if $p\geq 2h-2$.

\medskip
\noindent
{\bf Acknowledgements.}\quad We would like to thank Steve Doty for many useful discussions 
and comments on previous versions of this manuscript.

\end{document}